\documentclass[12pt,twoside,a4paper]{article}
\setlength{\textwidth}{157mm}
\setlength{\textheight}{236mm}
\setlength{\oddsidemargin}{2mm}
\setlength{\evensidemargin}{2mm}
\setlength{\topmargin}{-2mm}
\setlength{\skip\footins}{6mm plus 2mm}
%
\usepackage{bm}
\usepackage{braket}
\usepackage{graphicx}
\usepackage{graphics}
\usepackage{theorem}
\usepackage{amsmath}
\usepackage{amssymb,mathrsfs}
\usepackage{latexsym}
\usepackage{cases}
\usepackage[dvipdfmx,bookmarksnumbered,colorlinks,linkcolor=blue,urlcolor=blue,citecolor=blue]{hyperref}
\usepackage{color}
\usepackage{algorithm}
\usepackage{algpseudocode}
\usepackage{caption}
\captionsetup[algorithm]{labelsep=colon}

\newfloat{procedure}{H}{loa}
\floatname{procedure}{Procedure} 
\captionsetup[procedure]{labelsep=colon}
\newtheorem{THM}{Theorem}[section]
\newtheorem{PROP}[THM]{Proposition}

\newtheorem{ASSU}[THM]{Assumption}

\newtheorem{REM}[THM]{Remark}
\newtheorem{DEF}[THM]{Definition}



\newtheorem{hm-cond}{Condition}
\usepackage[dvipdfmx]{hyperref}
\hypersetup{
bookmarksnumbered,colorlinks=false,linkcolor=blue,urlcolor=blue,citecolor=blue
}

\numberwithin{equation}{section}  
\newcommand{\vc}{\bm}

%
\makeatletter
\DeclareRobustCommand\widecheck[1]{{\mathpalette\@widecheck{#1}}}
\def\@widecheck#1#2{%
    \setbox\z@\hbox{\m@th$#1#2$}%
    \setbox\tw@\hbox{\m@th$#1%
       \widehat{%
          \vrule\@width\z@\@height\ht\z@
          \vrule\@height\z@\@width\wd\z@}$}%
    \dp\tw@-\ht\z@
    \@tempdima\ht\z@ \advance\@tempdima2\ht\tw@ \divide\@tempdima\thr@@
    \setbox\tw@\hbox{%
       \raise\@tempdima\hbox{\scalebox{1}[-1]{\lower\@tempdima\box
\tw@}}}%
    {\ooalign{\box\tw@ \cr \box\z@}}}
\makeatother
\makeatletter
\newif\if@borderstar
\def\bordermatrix{\@ifnextchar*{%
 \@borderstartrue\@bordermatrix@i}{\@borderstarfalse\@bordermatrix@i*}%
}
\def\@bordermatrix@i*{\@ifnextchar[{\@bordermatrix@ii}{\@bordermatrix@ii[()]}}
\def\@bordermatrix@ii[#1]#2{%
\begingroup
 \m@th\@tempdima8.75\p@\setbox\z@\vbox{%
 \def\cr{\crcr\noalign{\kern 2\p@\global\let\cr\endline }}%
 \ialign {$##$\hfil\kern 2\p@\kern\@tempdima & \thinspace %
  \hfil $##$\hfil && \quad\hfil $##$\hfil\crcr\omit\strut %
  \hfil\crcr\noalign{\kern -\baselineskip}#2\crcr\omit %
  \strut\cr}}%
 \setbox\tw@\vbox{\unvcopy\z@\global\setbox\@ne\lastbox}%
 \setbox\tw@\hbox{\unhbox\@ne\unskip\global\setbox\@ne\lastbox}%
 \setbox\tw@\hbox{%
  $\kern\wd\@ne\kern -\@tempdima\left\@firstoftwo#1%
  \if@borderstar\kern 2pt\else\kern -\wd\@ne\fi%
 \global\setbox\@ne\vbox{\box\@ne\if@borderstar\else\kern 2\p@\fi}%
 \vcenter{\if@borderstar\else\kern -\ht\@ne\fi%
  \unvbox\z@\kern -\if@borderstar2\fi\baselineskip}%
 \if@borderstar\kern-2\@tempdima\kern2\p@\else\,\fi\right\@secondoftwo#1 $%
 }\null \;\vbox{\kern\ht\@ne\box\tw@}%
\endgroup
}
\makeatother


\newcommand{\ol}{\overline}
\newcommand{\ool}[1]{\overline{\overline{\bm{#1}}}}


\newcommand{\down}[2]{\smash{\lower#1\hbox{#2}}}
\newcommand{\up}[2]{\smash{\lower-#1\hbox{#2}}}

\newcommand{\dm}{\displaystyle}



\newcommand{\vmin}{\wedge}

\newcommand{\EE}{\mathbb{E}}
\newcommand{\PP}{\mathbb{P}}

\newcommand{\one}{\mbox{$1$}\hspace{-0.25em}{\rm l}}

\newcommand{\calL}{\mathcal{L}}

\newcommand{\calS}{\mathcal{S}}
\newcommand{\calX}{\mathcal{X}}


\newcommand{\bbL}{\mathbb{L}}
\newcommand{\bbM}{\mathbb{M}}
\newcommand{\bbN}{\mathbb{N}}
\newcommand{\bbR}{\mathbb{R}}
\newcommand{\bbS}{\mathbb{S}}

\newcommand{\bbZ}{\mathbb{Z}}







\newcommand{\dd}[1]{\if#11 1\!\!1
\else {\if#1C I\!\!\!C
\else {\if#1G I\!\!\!G
\else {\if#1J J\!\!\!J
\else {\if#1S S\!\!\!S
\else {\if#1Z Z\!\!\!Z
\else {\if#1Q O\!\!\!\!Q
\else I\!\!#1
\fi}
\fi}
\fi}
\fi}
\fi}
\fi}
\fi}
\makeatletter
\def\widebar{\accentset{{\cc@style\underline{\mskip10mu}}}}
\def\Widebar{\accentset{{\cc@style\underline{\mskip8mu}}}}
\makeatother
\newcommand{\wbartil}[1]{\if#1L \widebar{\hspace{-0.12zw}\widetilde{#1}}
\else {\if#1M \widebar{\widetilde{\!M\!}}
\else {\if#1W \widebar{\widetilde{\!W\!}}
\else {\if#1U \widebar{\hspace{-0.03zw}\widetilde{#1}}
\else {\if#1V \widebar{\hspace{-0.03zw}\widetilde{#1}}
\else {\if#1Y \widebar{\hspace{-0.0zw}\widetilde{#1}}
\else \,\widebar{\!\widetilde{#1}}
\fi}
\fi}
\fi}
\fi}
\fi}
\fi}
%
\makeatletter
\def\eqnarray{\stepcounter{equation}\let\@currentlabel=\theequation
\global\@eqnswtrue
\global\@eqcnt\z@\tabskip\@centering\let\\=\@eqncr
$$\halign to \displaywidth\bgroup\@eqnsel\hskip\@centering
$\displaystyle\tabskip\z@{##}$&\global\@eqcnt\@ne
\hfil$\;{##}\;$\hfil
&\global\@eqcnt\tw@ $\displaystyle\tabskip\z@{##}$\hfil
\tabskip\@centering&\llap{##}\tabskip\z@\cr}
\makeatother

\pagestyle{myheadings}
\markboth{\small K. Ouchi and H. Masuyama}
{
The LI truncation approximation of M/G/1-type Markov chains
}

\makeatother


\begin{document}\thispagestyle{empty}

\hfill

\vspace{-10mm}

{\large{\bf
\begin{center}
A SUBGEOMETRIC CONVERGENCE FORMULA FOR TOTAL-VARIATION ERROR OF THE LEVEL-INCREMENT TRUNCATION APPROXIMATION OF M/G/1-TYPE MARKOV CHAINS%
%
%
%
%
\end{center}
}
}

\begin{center}
{
\begin{tabular}[h]{cc}
Katsuhisa Ouchi\footnotemark[2] & Hiroyuki Masuyama\footnotemark[3]                        \\ 
\textit{Kyoto University}&\textit{Tokyo Metropolitan University}\\ 
\end{tabular}
\footnotetext[2]{E-mail: o-uchi@sys.i.kyoto-u.ac.jp}
\footnotetext[3]{E-mail: masuyama@tmu.ac.jp}
}

\bigskip
\medskip

{\small
\textbf{Abstract}

\medskip

\begin{tabular}{p{0.85\textwidth}}
This paper considers the level-increment (LI) truncation approximation of M/G/1-type Markov chains. The LI truncation approximation is usually used to implement Ramaswami's recursion for the stationary distribution in M/G/1-type Markov chains. The main result of this paper is a subgeometric convergence formula for the total-variation distance between the stationary distribution and its LI truncation approximation.
\end{tabular}
}
\end{center}

\begin{center}
\begin{tabular}{p{0.90\textwidth}}
{\small
{\bf Keywords:} %
Markov process;
M/G/1-type Markov chain;
level-increment (LI) truncation approximation;
total-variation distance;
high-order longtailed;
subexponential
%
%

\medskip

{\bf Mathematics Subject Classification:} %
60J10; 60K25
}
\end{tabular}

\end{center}

\section{Introduction}
\label{sec:Intro}

The level-increment (LI) truncation approximation is one of simple and practical approaches to implementing Ramaswami's recursion for the stationary distribution vector in M/G/1-type Markov chains (see, e.g., the introduction of \cite{Ouchi-Masuyama21-2}). The LI truncation approximation modifies the original M/G/1-type transition probability matrix into another M/G/1-type one with bounded level increments (with jump sizes truncated at a particular level). In the corresponding modified M/G/1-type Markov chain, Ramaswami's recursion does not include infinite sums and thus is implementable on a computer.

Even though the LI truncation approximation would be the simplest and most practical one of the approximations for implementing Ramaswami's recursion, there has not yet been sufficient research on its error evaluation. Ouchi and Masuyama presented the convergence formulas for the {\it level-wise (i.e., not the whole)} difference between the original stationary distribution vector and its LI truncation approximation, assuming the equilibrium level-increment distribution (in steady-state) is light-tailed \cite{Ouchi-Masuyama21-2} and long-tailed \cite{Ouchi-Masuyama21}. Although such a convergence formula identifies the convergence speed of the level-wise error of the LI truncation approximation, it does not necessarily identify the convergence speed of the LI truncation approximation as a whole. In this sense, the error evaluation of the level-wise difference is not enough compared to the error evaluation by norm, such as the total variation norm. In addition, as far as we know, there have been no studies on the error evaluation of the whole LI truncation approximation by norm, even in an M/G/1-type Markov chain with a single phase.

The purpose of this paper is to derive a subgeometric convergence formula for the {\it total-variation} error of the LI truncation approximation. The rest of this paper consists of three sections. Section~\ref{sec:preliminary_results} introduces preliminary results related to the M/G/1-type Markov chain and its LI truncation approximation. Section~\ref{sec:subgeometric} presents the main result of this paper, that is, the subgeometric convergence formula for the total-variation error of the LI truncation approximation. Section~\ref{sec:concluding} contains concluding remarks.

\section{Preliminaries}
\label{sec:preliminary_results}
This section provides preliminary results on the M/G/1-type Markov chain and its LI truncation approximation. We first introduce the M/G/1-type Markov chain and a well-known sufficient condition for the existence of the stationary distribution. We then describe Ramswami's recursion for the stationary distribution. After that, we introduce the LI truncation approximation as a practical approach to implementing Ramswami's recursion.

We begin with the definition of the M/G/1-type Markov chain.
To this end, let $\bbN = \{1,2,3,\dots\}$ and $\bbZ_+ = \bbN \cup \{0\}$. Let $\bbL_k = \{k\} \times \mathbb{M}_{k \vmin 1}$ for $k \in \bbZ_+$, where
\begin{alignat*}{2}
\mathbb{M}_0 &= \{1,2,\ldots, M_0\} \subset \bbN,
&\qquad
\mathbb{M}_1 &= \{1,2,\ldots,M_1\} \subset \bbN,
\end{alignat*}
and $x \vmin y = \min(x,y)$ for $x,y \in (-\infty,\infty)$. We then define $\{(X_n, J_n); n \in \mathbb{Z}_+\}$ as a discrete-time Markov chain on state space $\bbS:=\cup_{k=0}^{\infty}\bbL_k$ with the transition probability matrix $\vc{P}$:
\begin{equation*}
\vc{P}=
\bordermatrix{
& \bbL_0
& \bbL_1
& \bbL_2
& \bbL_3
& \cdots \cr
\bbL_0
& \vc{B}(0)
& \vc{B}(1)
& \vc{B}(2)
& \vc{B}(3)
& \cdots \cr
\bbL_1
& \vc{B}(-1)
& \vc{A}(0)
& \vc{A}(1)
& \vc{A}(2)
& \cdots \cr
\bbL_2
& \vc{O}
& \vc{A}(-1)
& \vc{A}(0)
& \vc{A}(1)
& \cdots \cr
\bbL_3
& \vc{O}
& \vc{O}
& \vc{A}(-1)
& \vc{A}(0)
& \cdots
\cr
~\vdots
& \vdots
& \vdots
& \vdots
& \vdots
& \ddots
},
\end{equation*}
where $\vc{O}$ denotes the zero matrix, $\vc{B}(-1)$ denotes an $M_1 \times M_0$ nonnegative matrix, $\vc{B}(k)$, $k \in \bbZ_+$, denotes an $M_0 \times M_{k \vmin 1}$ nonnegative matrix, and  $\vc{A}(k)$, $k \ge -1$, denotes an $M_1 \times M_1$ nonnegative matrix. The subset $\bbL_k$ is called {\it level} $k$ and a state $(k, i) \in \bbL_k$ is called {\it phase} $i$ of level $k$. We refer to $\{(X_n, J_n)\}$ as the M/G/1-type Markov chain.

We now introduce the fundamental assumption of this paper.
\begin{ASSU} \label{assum1}
Let $\vc{e}$ denote a column vector consisting an appropriate number of ones, and let $\ol{\vc{m}}_A = \sum_{k = -1}^\infty k\vc{A}(k)\vc{e}$. The following hold: (i) the stochastic matrix $\vc{P}$ is irreducible;
(ii) $\vc{A}:=\sum_{k=-1}^\infty \vc{A}(k)$ is an irreducible stochastic matrix; (iii) $\ol{\vc{m}}_B:=\sum_{k = 1}^\infty k\vc{B}(k)\vc{e}$ is finite; and
(iv) $\sigma := \vc{\varpi} \ol{\vc{m}}_A < 0$, where $\vc{\varpi}$ denotes the  stationary distribution vector of $\vc{A}$.
\end{ASSU}

\smallskip

Assumption~\ref{assum1} ensures that $\vc{P}$ is irreducible and positive recurrent, which implies that $\vc{P}$ has the unique stationary distribution vector, denoted by $\vc{\pi} := (\pi(k, i))_{(k,i) \in \bbS}$ (see, e.g., \cite[Chapter~XI, Proposition~3.1]{Asmu03}). For later use, let $\vc{\pi}(k) := (\pi(k, i))_{i\in\bbM_{k\vmin 1}}$, $k \in \bbZ_+$, denote the level-wise subvector of $\vc{\pi}$. By definition,
\begin{align*}
\vc{\pi} = (\vc{\pi}(0),\vc{\pi}(1),\dots).
\end{align*}

We define some substochastic matrices, which are key components of Ramaswami's recursion for the stationary distribution vector $\vc{\pi} = (\vc{\pi}(0),\vc{\pi}(1),\dots)$. Let $\vc{G} := (G_{i, j})_{(i,j)\in(\bbM_1)^2}$ denote an $M_1 \times M_1$ matrix such that
\begin{align*}
G_{i, j} = \PP\left((X_{T_1}, J_{T_1})
= (1, j) \mid (X_0, J_0) = (2, i)\right), \qquad
i, j \in \mathbb{M}_1,
\end{align*}
where $T_k = \inf\{n \in \bbN: X_n = k\}$ for $k \in \bbZ_+$. The matrix $\vc{G}$ is called the $G$-matrix and is the limit of the following sequence $\{\vc{G}_n; n \in \bbZ_+\}$:
\begin{subnumcases}{\label{eqn:recursion_G} \vc{G}_n=}
\vc{O}, & \text{$n=0$,}
\\
\sum_{m = -1}^{\infty} \vc{A}(m) [\vc{G}_{n-1}]^{m+1},
 & \text{$n \in \bbN$,}
\end{subnumcases}
where $\vc{O}^0 = \vc{I}$. Assumption~\ref{assum1} (ii) and (iv) ensure that $\vc{G}$ is stochastic \cite[Theorem 2.3.1]{Neut89} and has the unique stationary distribution vector, denoted by $\vc{g}$ \cite[Proposition~2.1]{Kimu10}. Furthermore, let
\begin{align}
\vc{K}
&= \vc{B}(0) + \sum_{m=1}^{\infty} \vc{B}(m) \vc{G}^m,
\label{defn:K}
\\
\vc{\Phi}(0) &= \sum_{m=0}^\infty \vc{A}(m)\vc{G}^m.
\label{defn:Phi(0)}
\end{align}
Assumption~\ref{assum1} ensures that $\vc{K}$ is an irreducible stochastic matrix and has the unique stationary distribution vector, denoted by $\vc{\kappa}$ (see \cite[Theorem~3.1]{Sche90}). Assumption~\ref{assum1} also ensures that the Neumann series of $\vc{\Phi}(0)$ is convergent and thus $\sum_{m=0}^\infty [\vc{\Phi}(0)]^m =  (\vc{I} - \vc{\Phi}(0))^{-1}$ (see the proof of \cite[Theorem~2.1~(ii)]{Sche90}), where $\vc{I}$ denote the identity matrix. With the inverse matrix $(\vc{I} - \vc{\Phi}(0))^{-1}$, we define the following matrices:
\begin{subequations}\label{defn:R-matrices}
\begin{alignat}{2}
\vc{R}(k)
&=
\dm\sum_{m=0}^{\infty}
\vc{A}(k+m)\vc{G}^m (\vc{I}-\vc{\Phi}(0))^{-1}, & \qquad k &\in \bbN,
\label{defn-R(k)}
\\
\vc{R}_0(k)
&=
\sum_{m=0}^{\infty}
\vc{B}(k+m)\vc{G}^m(\vc{I}-\vc{\Phi}(0))^{-1}, & \qquad k &\in \bbN,
\label{defn-R_0(k)}
\\
\vc{R}_0 &= \sum_{m=1}^{\infty}\vc{R}_0(m),
\qquad
\vc{R} = \sum_{m=1}^{\infty}\vc{R}(m).
\label{defn:R_0-R}
\end{alignat}
\end{subequations}

We are ready to describe Ramaswami's recursion for $\{\vc{\pi}(k);k \in \bbZ_+\}$ (see \cite{Rama88,Sche90}). The sequence $\{\vc{\pi}(k);k \in \bbZ_+\}$ is determined by
\begin{subequations}\label{eq:Ramaswami}
\begin{align}
\vc{\pi}(0)
&= {
\vc{\kappa}
\over
\vc{\kappa}\vc{R}_0 (\vc{I} - \vc{R})^{-1} \vc{e}
},
\label{eq:Rama-pi(0)}
\\
\vc{\pi}(k)
&= \vc{\pi}(0)\vc{R}_0(k)
+ \sum_{\ell=1}^{k-1}\vc{\pi}(\ell)\vc{R}(k-\ell),
\qquad
k\in\bbN,
\end{align}
\end{subequations}
where $(\vc{I} - \vc{R})^{-1} = \sum_{m=0}^{\infty}\vc{R}^m$ holds if the M/G/1-type Markov chain $\{(X_{n},J_{n})\}$ is irreducible and positive recurrent (see \cite[Theorem~3.4]{Zhao98}).

We usually use an approximation, the level-increment (LI) truncation approximation, to implement Ramaswami's recursion. Ramaswami's recursion (\ref{eq:Ramaswami}) and auxiliary equations (\ref{eqn:recursion_G})--(\ref{defn:R-matrices}) have the infinite sums originated from the infinite sequences $\{\vc{A}(k)\}$ and $\{\vc{B}(k)\}$. The infinite sums are obstacles to implementing Ramaswami's recursion. Therefore, to remove the obstacles, we usually truncate the infinite sequences $\{\vc{A}(k)\}$ and $\{\vc{B}(k)\}$, which is the level-increment (LI) truncation approximation to the M/G/1-type Markov chain: For $N \in \bbN$,
\begin{align*}
\vc{P}^{(N)}
:=
\bordermatrix{
& \bbL_0
& \bbL_1
& \bbL_2
& \bbL_3
& \cdots \cr
\bbL_0
& \vc{B}^{(N)}(0)
& \vc{B}^{(N)}(1)
& \vc{B}^{(N)}(2)
& \vc{B}^{(N)}(3)
& \cdots \cr
\bbL_1
& \vc{B}^{(N)}(-1)
& \vc{A}^{(N)}(0)
& \vc{A}^{(N)}(1)
& \vc{A}^{(N)}(2)
& \cdots \cr
\bbL_2
& \vc{O}
& \vc{A}^{(N)}(-1)
& \vc{A}^{(N)}(0)
& \vc{A}^{(N)}(1)
& \cdots \cr
\bbL_3
& \vc{O}
& \vc{O}
& \vc{A}^{(N)}(-1)
& \vc{A}^{(N)}(0)
& \cdots
\cr
~\vdots
& \vdots
& \vdots
& \vdots
& \vdots
& \ddots
},
\end{align*}
where
\begin{eqnarray*}
\vc{A}^{(N)}(k)
&=&
\left\{
\begin{array}{ll}
\vc{A}(k),			& k \in \{-1,0,1,\dots,N-1\},
\\
\ol{\vc{A}}(N-1):=\dm\sum_{\ell=N}^{\infty}\vc{A}(\ell), 	& k=N,
\\
\vc{O}, 			& k \in \{N+1,N+2,\dots\},
\end{array}
\right.
\\
\vc{B}^{(N)}(k)
&=&
\left\{
\begin{array}{ll}
\vc{B}(k),			& k \in \{-1,0,1,\dots,N-1\},
\\
\ol{\vc{B}}(N-1):=\dm\sum_{\ell=N}^{\infty}\vc{B}(\ell), 	& k=N,
\\
\vc{O}, 			& k \in \{N+1,N+2,\dots\}.
\end{array}
\right.
\end{eqnarray*}
We refer to $\vc{P}^{(N)}$ as the LI truncation approximation to $\vc{P}$.

Assumption~\ref{assum1} ensures that the LI truncation approximation $\vc{P}^{(N)}$ has the unique stationary distribution vector, denoted by $\vc{\pi}^{(N)} := (\pi^{(N)}(k, i))_{(k,i)\in\bbS}$, as shown in Proposition~\ref{prop:convergence_piN_to_pi}. Based on Proposition~\ref{prop:convergence_piN_to_pi}, we refer to $\vc{\pi}^{(N)}$ as the LI truncation approximation to $\vc{\pi}$.
\begin{PROP}[{}{\cite[Proposition~3.1 and Theorem 4.3]{Ouchi-Masuyama21}}]\label{prop:convergence_piN_to_pi}
If Assumption~\ref{assum1} holds, $\vc{P}^{(N)}$ has the unique stationary distribution vector $\vc{\pi}^{(N)}$, which converges to the (original) stationary distribution $\vc{\pi}$ of $\vc{P}$ in total-variation norm, that is,
\begin{equation*}
\lim_{N \to \infty}\|\vc{\pi}^{(N)} - \vc{\pi}\| = 0,
\end{equation*}
where for any vector $\vc{x} := (x_i)_{i\in\calX}$, $\|\vc{x}\|$ denotes the total-variation norm of $\vc{x}$, that is,
\[
\|\vc{x}\| = \sum_{i \in \calX} |x_i|.
\]
\end{PROP}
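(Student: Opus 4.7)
The plan is to establish the three claims — existence, uniqueness, and total-variation convergence of $\vc{\pi}^{(N)}$ — by first recognizing that $\vc{P}^{(N)}$ is itself an M/G/1-type stochastic matrix and then combining classical positive-recurrence theory with a uniform tightness argument. First I would verify that $\vc{P}^{(N)}$ inherits a version of Assumption~\ref{assum1}. Stochasticity is immediate from the construction, since $\ol{\vc{A}}(N-1)$ and $\ol{\vc{B}}(N-1)$ absorb the truncated tails. The level-generator satisfies $\vc{A}^{(N)} := \sum_{k=-1}^{N} \vc{A}^{(N)}(k) = \vc{A}$, so it shares the stationary vector $\vc{\varpi}$. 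For the drift, $\ol{\vc{m}}_A^{(N)} = \sum_{k=-1}^{N} k \vc{A}^{(N)}(k)\vc{e} \le \ol{\vc{m}}_A$ componentwise, because redistributing the tail mass $\sum_{\ell \ge N}\vc{A}(\ell)\vc{e}$ to level increment $N$ strictly decreases each component of the mean. Hence $\sigma^{(N)} := \vc{\varpi}\ol{\vc{m}}_A^{(N)} \le \sigma < 0$.

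Irreducibility of $\vc{P}^{(N)}$ requires a short argument from that of $\vc{P}$: since $\vc{A}^{(N)}(-1)=\vc{A}(-1)$ and $\vc{B}^{(N)}(-1)=\vc{B}(-1)$, every level can reach the boundary $\bbL_0$ as under $\vc{P}$; and the forward reachability from $\bbL_0$ to arbitrary levels follows because the blocks $\vc{A}^{(N)}(k)$ for $0 \le k \le N-1$ and $\vc{B}^{(N)}(k)$ for $1 \le k \le N-1$ are unchanged from their $\vc{P}$-counterparts, and the extra block $\vc{A}^{(N)}(N) = \ol{\vc{A}}(N-1)$ is bounded below by $\vc{A}(N)$. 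With irreducibility and $\sigma^{(N)} < 0$ in hand, the classical M/G/1-type positive-recurrence criterion (\cite[Chapter~XI, Proposition~3.1]{Asmu03} or Neuts' theorem) delivers the existence and uniqueness of $\vc{\pi}^{(N)}$.

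For the total-variation convergence I would use an $\varepsilon/3$ argument based on (a) level-wise convergence and (b) uniform tail tightness. For (a), I would apply Ramaswami's recursion (\ref{eq:Ramaswami}) to $\vc{P}^{(N)}$ and show that the associated objects $\vc{G}^{(N)}, \vc{K}^{(N)}, \vc{\Phi}^{(N)}(0), \vc{R}^{(N)}(k), \vc{R}_0^{(N)}(k), \vc{\kappa}^{(N)}$ all converge entrywise to their untruncated counterparts as $N \to \infty$; each convergence reduces to a dominated-convergence argument on the defining series (\ref{eqn:recursion_G})--(\ref{defn:R-matrices}), using the finiteness of $\vc{A} = \sum_k \vc{A}(k)$ and $\ol{\vc{m}}_B$. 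Consequently $\vc{\pi}^{(N)}(k) \to \vc{\pi}(k)$ componentwise for every fixed $k \in \bbZ_+$. For (b), I would establish a Foster--Lyapunov drift bound uniform in $N$: using a linear test function $V(k,i)=k$ together with $\sigma^{(N)} \le \sigma < 0$ and the finiteness of $\ol{\vc{m}}_B$, one obtains a uniform bound $\sum_{k \in \bbZ_+} k \, \vc{\pi}^{(N)}(k)\vc{e} \le C$ independent of $N$, hence $\sum_{k > M}\vc{\pi}^{(N)}(k)\vc{e} \le C/M$ for all $N$. Combining (a) and (b), for any $\varepsilon > 0$ one chooses $M$ so the tails are at most $\varepsilon/3$ for both $\vc{\pi}$ and every $\vc{\pi}^{(N)}$, and then $N$ so large that the first $M$ blocks of $\vc{\pi}^{(N)}$ agree with $\vc{\pi}$ within $\varepsilon/3$; this yields $\|\vc{\pi}^{(N)}-\vc{\pi}\| \le \varepsilon$.

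The main obstacle I expect is step (b), the uniform tail estimate. Producing a genuinely $N$-independent Lyapunov bound requires that the modified boundary block-row, which contains the aggregated block $\ol{\vc{B}}(N-1)$, does not spoil the drift; one must verify that the resulting Foster drift outside a finite exception set is uniformly negative and that the exception set and the bound on the drift excess inside it can be chosen independently of $N$. This hinges critically on Assumption~\ref{assum1}(iii) (the finiteness of $\ol{\vc{m}}_B$) and on the componentwise domination $\ol{\vc{m}}_A^{(N)} \le \ol{\vc{m}}_A$ derived above; once these are exploited carefully, the rest of the proof is structural.
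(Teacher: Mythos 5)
The paper itself does not prove this proposition; it imports it wholesale from \cite[Proposition~3.1 and Theorem~4.3]{Ouchi-Masuyama21}, so your attempt has to stand on its own, and as written it has a genuine gap in step (b). A linear test function $V(k,i)=k$ with uniformly negative drift yields positive recurrence of $\vc{P}^{(N)}$, but it does \emph{not} yield a bound on the stationary mean level $\sum_{k} k\,\vc{\pi}^{(N)}(k)\vc{e}$; for that one needs a quadratic test function and hence finite \emph{second} moments of the level increments, which Assumption~\ref{assum1} does not provide. Worse, the asserted uniform bound is simply false in the regime this paper is aimed at: Assumption~\ref{assum1} only requires $\ol{\vc{m}}_A$ and $\ol{\vc{m}}_B$ to be finite, and the remark following Proposition~\ref{prop:tail_pi} explicitly allows $\ol{\vc{\pi}}(N)\vc{e}$ to decay like $(N+1)^{-(\gamma-1)}$ with $1<\gamma\le 2$, in which case even the limiting chain has an infinite stationary mean level; since $\vc{\pi}^{(N)}(k)\to\vc{\pi}(k)$ levelwise, Fatou's lemma rules out any $N$-independent constant $C$ with $\sum_k k\,\vc{\pi}^{(N)}(k)\vc{e}\le C$. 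So the uniform tail estimate $\sum_{k>M}\vc{\pi}^{(N)}(k)\vc{e}\le C/M$ cannot be obtained along the route you propose, and the $\varepsilon/3$ argument collapses.

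The repair is that step (b) is unnecessary: once you have step (a), i.e.\ entrywise convergence $\pi^{(N)}(k,i)\to\pi(k,i)$ for every $(k,i)\in\bbS$, total-variation convergence follows automatically from Scheff\'e's lemma, because both vectors are probability distributions on the countable set $\bbS$, so $\|\vc{\pi}^{(N)}-\vc{\pi}\| = 2\sum_{(k,i)\in\bbS}\bigl(\pi(k,i)-\pi^{(N)}(k,i)\bigr)^{+}$ and dominated convergence with dominating function $\vc{\pi}$ gives the limit $0$. This shifts all the weight onto step (a), which you treat too lightly: $\vc{G}^{(N)}$ is the minimal solution of a nonlinear fixed-point equation with truncated coefficients, so its convergence to $\vc{G}$ is not a dominated-convergence statement about a single series, and the convergence of $\vc{R}^{(N)}(k)$, $\vc{\kappa}^{(N)}$ and of the normalizing constant $\vc{\kappa}^{(N)}\vc{R}_0^{(N)}(\vc{I}-\vc{R}^{(N)})^{-1}\vc{e}$ requires uniform-in-$N$ domination of double series. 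Your irreducibility sketch also has a hole: replacing an upward jump of size $m>N$ by one of size exactly $N$ leaves the path at a lower level, so the original $\vc{P}$-path cannot simply be continued; one has to exploit level homogeneity and the skip-free-to-the-left structure (and note that the decrease $\ol{\vc{m}}_A^{(N)}\le\ol{\vc{m}}_A$ is weak, not strict). These reachability, existence/uniqueness and levelwise-convergence issues are precisely the content of the cited Proposition~3.1 and Theorem~4.3 of \cite{Ouchi-Masuyama21}, which is why the present paper quotes them rather than reproving them.
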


The LI truncation approximation $\vc{\pi}^{(N)}$ to $\vc{\pi}$ satisfies Ramaswami's recursion for the M/G/1-type stochastic matrix $\vc{P}^{(N)}$. More specifically, the vectors $\vc{\pi}^{(N)}(k) := (\pi^{(N)}(k, i))_{(k,i)\in\bbL_k}$, $k \in \bbZ_+$ are determined by the recursion obtained by replacing $\{\vc{A}(k)\}$ and $\{\vc{B}(k)\}$ with $\{\vc{A}^{(N)}(k)\}$ and $\{\vc{B}^{(N)}(k)\}$. To save space, we omit the details (see \cite[Section 1]{Ouchi-Masuyama21-2}).

\section{Main Results}
\label{sec:subgeometric}

This section contains the main results of this paper. First, we introduce two additional assumptions and then provide the existing asymptotic formulas for the original stationary distribution vector and its LI truncation approximation. With the asymptotic formulas, we prove the main theorem of this paper.

We begin with making two additional assumptions.
\begin{ASSU}\label{assum:aperiodic}
The single communication class of $\vc{G}$ is aperiodic.
\end{ASSU}
\begin{ASSU}\label{assum3}
Let
\begin{alignat*}{2}
\ool{\vc{A}}(k)
&= \sum_{\ell = k + 1}^\infty \ol{\vc{A}}(\ell),
& \qquad
\ool{\vc{B}}(k)
&= \sum_{\ell = k + 1}^\infty \ol{\vc{B}}(\ell).
\end{alignat*}
There exists a distribution function $F$ on $\bbR_+:=[0,\infty)$ such that
\[
\lim_{N \to \infty}
{
\ool{\vc{A}}(N)\vc{e}
\over
\ol{F}(N)
} = \vc{c}_A,
\quad
\lim_{N \to \infty}
{
\ool{\vc{B}}(N)\vc{e}
\over
\ol{F}(N)
} = \vc{c}_B,
\]
where $\vc{c}_A \ge \vc{0}$ and $\vc{c}_B \ge \vc{0}$ are $M_1$- and $M_0$-dimensional finite column vectors, respectively, and either of them is a non-zero vector.
\end{ASSU}

\smallskip

Before presenting our main theorem, we provide Proposition~\ref{prop:tail_pi} below, which together with Proposition~\ref{prop:high-order_long-tailed} is key to proving the theorem. The proposition does not necessarily require Assumption~\ref{assum:aperiodic}.
\begin{PROP}[{}{\cite[Theorem 3.1]{Masu16-ANOR}}, {\cite[Theorem 5.2]{Ouchi-Masuyama21}}] \label{prop:tail_pi}
Suppose that Assumptions \ref{assum1} and \ref{assum3} hold, and let $\ol{\vc{\pi}}(k) = \sum_{\ell=k+1}^\infty \vc{\pi}(\ell)$ for $k \in \bbZ_+$. If $F$ is long-tailed (i.e., $F \in \calL$; see Definition~\ref{defn-long-tailed}), then
\begin{alignat}{2}
\lim_{N \to \infty}
{
\vc{\pi}^{(N)}(k) - \vc{\pi}(k)
\over
\ol{F}(N)
}
&=
{\vc{\pi}(0)\vc{c}_B + \ol{\vc{\pi}}(0)\vc{c}_A
\over
-\sigma
}\vc{\pi}(k)>\vc{0}, &\qquad k &\in \bbZ_+.
\label{eq:subexp-01}
\end{alignat}
Furthermore, if $F$ is subexponential (that is, $F \in \calS \subsetneq \calL$; see Definition~\ref{defn-long-tailed}), then
\begin{equation}
\lim_{N\to\infty} {\ol{\vc{\pi}}(N) \over \ol{F}(N)} = {\vc{\pi}(0)\vc{c}_B + \ol{\vc{\pi}}(0)\vc{c}_A \over -\sigma}\vc{\varpi}.
\label{eq:tail_pi}
\end{equation}

\end{PROP}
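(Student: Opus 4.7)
The statement collects two established tail asymptotics, explicitly cited as \cite[Theorem~3.1]{Masu16-ANOR} and \cite[Theorem~5.2]{Ouchi-Masuyama21}, so my plan is not to reprove them from scratch but to outline the route one would take to reproduce them. The preliminary step is to check that Assumptions~\ref{assum1} and~\ref{assum3} imply the hypotheses used in those sources; this reduces to noting that tail-equivalence of $\ool{\vc{A}}(N)\vc{e}$ and $\ool{\vc{B}}(N)\vc{e}$ to $\ol{F}(N)$ transfers the class memberships $\calL$ (resp.\ $\calS$) from $F$ to these aggregated tails, which is immediate from the definitions.

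For the subexponential identity~\eqref{eq:tail_pi}, I would start from the level-wise balance equation for $\vc{\pi}$, sum it over $k>N$, and rearrange by Fubini to obtain a vector-valued renewal-like identity that links $\ol{\vc{\pi}}(N)(\vc{I}-\vc{A})$ to $\vc{\pi}(0)\ool{\vc{B}}(N-1)$ and to a convolution of $\vc{\pi}(\ell)$ against $\ool{\vc{A}}(N-\ell)$. Dividing by $\ol{F}(N)$ and letting $N\to\infty$, subexponentiality of $F$ lets one pass the limit into the convolution, producing the inhomogeneous term $\ol{\vc{\pi}}(0)\vc{c}_A+\vc{\pi}(0)\vc{c}_B$, while the singularity of $\vc{I}-\vc{A}$ together with the irreducibility of $\vc{A}$ forces the limit of $\ol{\vc{\pi}}(N)/\ol{F}(N)$ to be a multiple of $\vc{\varpi}$. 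Pairing the identity with $\vc{e}$ and using $\vc{\varpi}\ol{\vc{m}}_A=\sigma$ pins the scalar prefactor as $1/(-\sigma)$, giving~\eqref{eq:tail_pi}.

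For the level-wise truncation error~\eqref{eq:subexp-01}, I would subtract the balance equations for $\vc{\pi}$ and $\vc{\pi}^{(N)}$ and express $\vc{\pi}^{(N)}-\vc{\pi}$ in terms of the tail perturbation $\vc{P}^{(N)}-\vc{P}$, whose nonzero blocks are precisely the folded tails $\ol{\vc{A}}(N-1)$ and $\ol{\vc{B}}(N-1)$ at column $N$ together with $-\vc{A}(k)$ and $-\vc{B}(k)$ at columns $k>N$. Under Assumption~\ref{assum3}, the total row mass of this perturbation divided by $\ol{F}(N)$ converges to a multiple of $\vc{c}_A$ or $\vc{c}_B$, and Proposition~\ref{prop:convergence_piN_to_pi} allows $\vc{\pi}^{(N)}$ to be replaced by $\vc{\pi}$ in the product $\vc{\pi}^{(N)}(\vc{P}^{(N)}-\vc{P})$. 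Propagating this perturbation through Ramaswami's recursion~\eqref{eq:Ramaswami} with the $\vc{R}$-matrices from~\eqref{defn:R-matrices}, and using that the ergodic kernel of $\vc{P}$ reassembles the mass into the direction $\vc{\pi}(k)$, yields a limit of the stated form with scalar weight $(\vc{\pi}(0)\vc{c}_B+\ol{\vc{\pi}}(0)\vc{c}_A)/(-\sigma)$.

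The main obstacle will be the uniform domination needed to exchange limit and sum in both steps: one requires an envelope of the form $\vc{\pi}^{(N)}(k)\le C\,\vc{\pi}(k)$, or at least a tail-summable version thereof, uniformly in $N$. Securing such an envelope is the technical heart of the Ouchi--Masuyama argument and is where the long-tailedness $F\in\calL$ is genuinely used in~\eqref{eq:subexp-01}; subexponentiality alone suffices for~\eqref{eq:tail_pi} but not for the level-wise statement, which is why Assumption~\ref{assum3} is invoked with only the weaker $F\in\calL$ hypothesis in the first part of the proposition.
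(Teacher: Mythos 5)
The paper contains no proof of this proposition at all: it is imported verbatim from \cite[Theorem~3.1]{Masu16-ANOR} and \cite[Theorem~5.2]{Ouchi-Masuyama21}. Your decision to defer to those sources rather than reprove them is therefore exactly the paper's own treatment, and your preliminary check that Assumption~\ref{assum3} supplies the hypotheses those sources need is the right thing to verify. Your two sketches are also broadly in the spirit of the cited arguments: \eqref{eq:subexp-01} does rest on a difference formula for $\vc{\pi}^{(N)}(k)-\vc{\pi}(k)$ (the formula \eqref{eq:piN(k)-pi(k)} quoted in Appendix~\ref{sec:proof_of_theorem} from \cite[Lemma~4.2]{Ouchi-Masuyama21}), and \eqref{eq:tail_pi} is a subexponential stationary-tail asymptotic of Markov-renewal type. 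Be aware, though, that your renewal-identity sketch is only heuristic: summing the balance equations once produces the single-barred tails $\ol{\vc{A}}$, $\ol{\vc{B}}$, which are $o(\ol{F}(N))$ by \eqref{eq:ol_AB_F}, not the integrated tails $\ool{\vc{A}}$, $\ool{\vc{B}}$ of Assumption~\ref{assum3}; pairing the limiting identity with $\vc{e}$ cannot pin the constant because $(\vc{I}-\vc{A})\vc{e}=\vc{0}$; and passing the limit into the convolution is precisely the nontrivial step carried out in \cite{Masu16-ANOR}. Since you explicitly lean on the citations this is not fatal, but those steps should not be presented as routine.

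There is, however, one genuine error in your closing paragraph. By Remark~\ref{rem1}, $\calS\subsetneq\calL$, so subexponentiality is the \emph{stronger} property and automatically implies long-tailedness; your claim that subexponentiality ``suffices for \eqref{eq:tail_pi} but not for the level-wise statement'' is backwards. The logic of the proposition is the reverse of what you describe: the weaker condition $F\in\calL$ already yields the level-wise limit \eqref{eq:subexp-01}, while the stronger condition $F\in\calS$ is what is needed for the stationary-tail asymptotic \eqref{eq:tail_pi} (this is also why the remark after Theorem~\ref{th:subgeo_tvd} stresses that \eqref{eq:subgeo_tvd-2} requires $F\in\calS$). Relatedly, the uniform envelope you posit, $\vc{\pi}^{(N)}(k)\le C\,\vc{\pi}(k)$ level by level, is not what the underlying arguments rely on; what Proposition~\ref{prop:convergence_piN_to_pi} actually provides, and what the present paper itself uses for the analogous domination in Appendix~\ref{sec:proof_of_theorem}, is the tail bound $\sum_{k>m}\vc{\pi}^{(N)}(k)\le(1+\delta)\,\ol{\vc{\pi}}(m)$ uniformly in $N$.
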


\begin{REM}
The conditions of Proposition~\ref{prop:tail_pi} do not necessarily imply that the mean of $F$ is finite. Let $H$ denote a long-tailed distribution $H$ on $\bbR_+$ such that $\ol{H}(x) := 1 - H(x) = (x+1)^{-\gamma}$ for $x \in \bbR_+$, where $1 < \gamma \le 2$. Suppose that
\begin{align}
\lim_{N \to \infty}
{
\ol{\vc{A}}(N)\vc{e}
\over
\ol{H}(N)
} = \vc{c}_A^*,
\quad
\lim_{N \to \infty}
{
\ol{\vc{B}}(N)\vc{e}
\over
\ol{H}(N)
} = \vc{c}_B^*,
\label{eqn:22_1217-01}
\end{align}
where $\vc{c}_A^* \ge \vc{0}$ and $\vc{c}_B^* \ge \vc{0}$ are $M_1$- and $M_0$-dimensional finite column vectors, respectively, and either of them is a non-zero vector. By definition, the mean of $H$ is equal to $(\gamma-1)^{-1} < \infty$ and thus the present setting is compatible with Assumption~\ref{assum1}. Furthermore, let $H_I$ denote the integrated tail distribution of $F$ on $\bbR_+$ and let $\ol{H}_I(x)$ denote
\begin{align*}
\ol{H}_I(x) = (\gamma - 1) \dm\int_{x}^{\infty} \ol{H}(t) dt
= (x+1)^{-\gamma+1},\qquad x \in \bbR_+.
\end{align*}
Thus, the mean of $H_I$ is infinite. Furthermore, (\ref{eqn:22_1217-01}) yields
\begin{align*}
\lim_{N \to \infty}
{
\ool{\vc{A}}(N)\vc{e}
\over
\ol{H}_I(N)
} = {\vc{c}_A^* \over \gamma - 1},
\quad
\lim_{N \to \infty}
{
\ool{\vc{B}}(N)\vc{e}
\over
\ol{H}_I(N)
} = {\vc{c}_B^* \over \gamma - 1}.
\end{align*}
As a result, Assumption~\ref{assum3} holds with
\begin{align*}
F = H_I, \qquad
\vc{c}_A = {\vc{c}_A^* \over \gamma - 1},
\qquad
\vc{c}_B = {\vc{c}_B^* \over \gamma - 1}.
\end{align*}
\end{REM}

\smallskip

The following is the main theorem of this paper.
\begin{THM}\label{th:subgeo_tvd}
Suppose that Assumptions~\ref{assum1}, \ref{assum:aperiodic}, and \ref{assum3} hold. Suppose that $F$ is $p$th-order long-tailed for some $p > 1$ (that is, $F \in \calL^p$; see Definition~\ref{defn-high-order_long-tailed}). We then have
\begin{equation}
\lim_{N\to\infty} {\|\vc{\pi}^{(N)}-\vc{\pi}\| \over \ol{F}(N)}
=
{\vc{\pi}(0)\vc{c}_B + \ol{\vc{\pi}}(0)\vc{c}_A \over -\sigma} > 0.
\label{eq:subgeo_tvd}
\end{equation}
Furthermore, if $F$ is subexponential (that is, $F \in \calS$),
\begin{equation}
\lim_{N\to\infty} {\|\vc{\pi}^{(N)}-\vc{\pi}\| \over \ol{\vc{\pi}}(N)\vc{e}} = 1.
\label{eq:subgeo_tvd-2}
\end{equation}
\end{THM}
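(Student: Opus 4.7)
The plan is to evaluate the level-wise decomposition $\|\vc{\pi}^{(N)}-\vc{\pi}\|=\sum_{k=0}^\infty\|\vc{\pi}^{(N)}(k)-\vc{\pi}(k)\|$ by combining the pointwise-in-$k$ asymptotic~(\ref{eq:subexp-01}) of Proposition~\ref{prop:tail_pi} with the tail control furnished by Proposition~\ref{prop:high-order_long-tailed}. Write $C:=(\vc{\pi}(0)\vc{c}_B+\ol{\vc{\pi}}(0)\vc{c}_A)/(-\sigma)>0$. By~(\ref{eq:subexp-01}), $(\vc{\pi}^{(N)}(k)-\vc{\pi}(k))/\ol{F}(N)\to C\vc{\pi}(k)$ with strictly positive limit, so for each fixed $k$ and all sufficiently large $N$ the vector $\vc{\pi}^{(N)}(k)-\vc{\pi}(k)$ is componentwise positive and hence $\|\vc{\pi}^{(N)}(k)-\vc{\pi}(k)\|=(\vc{\pi}^{(N)}(k)-\vc{\pi}(k))\vc{e}\sim C\ol{F}(N)\vc{\pi}(k)\vc{e}$. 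Summing this asymptotic over $k\le K$ and then letting $K\to\infty$ yields the lower bound $\liminf_{N\to\infty}\|\vc{\pi}^{(N)}-\vc{\pi}\|/\ol{F}(N)\ge C\sum_{k=0}^\infty\vc{\pi}(k)\vc{e}=C$.

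For the matching upper bound I split the series at a threshold $K_N\uparrow\infty$ whose growth rate is calibrated to the tail of $F$. On $k\le K_N$, the uniform-in-$k$ strengthening of~(\ref{eq:subexp-01}) that I expect Proposition~\ref{prop:high-order_long-tailed} to provide under the assumption $F\in\calL^p$ gives the contribution $C\ol{F}(N)\sum_{k\le K_N}\vc{\pi}(k)\vc{e}\,(1+o(1))$. On $k>K_N$, I would invoke the crude triangle bound $\|\vc{\pi}^{(N)}(k)-\vc{\pi}(k)\|\le(\vc{\pi}^{(N)}(k)+\vc{\pi}(k))\vc{e}$ together with tail estimates for $\ol{\vc{\pi}}(K_N)\vc{e}$ and $\ol{\vc{\pi}^{(N)}}(K_N)\vc{e}$ supplied by Proposition~\ref{prop:high-order_long-tailed}; for a suitable rate of growth of $K_N$ the residual is $o(\ol{F}(N))$, whence $\limsup_{N\to\infty}\|\vc{\pi}^{(N)}-\vc{\pi}\|/\ol{F}(N)\le C$ follows.

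The main technical obstacle is precisely this interchange of the limit $N\to\infty$ with the infinite summation over $k$: Proposition~\ref{prop:tail_pi} supplies only pointwise convergence, and termwise summation of the pointwise asymptotic is in tension with the conservation identity $\sum_k(\vc{\pi}^{(N)}(k)-\vc{\pi}(k))\vc{e}=0$. The $p$th-order long-tailed hypothesis is the ingredient that, through Proposition~\ref{prop:high-order_long-tailed}, upgrades pointwise asymptotics to the uniform tail control needed to balance the two sides of this exchange.

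Once~(\ref{eq:subgeo_tvd}) is established, the subexponential companion~(\ref{eq:subgeo_tvd-2}) is immediate: under the additional hypothesis $F\in\calS$, equation~(\ref{eq:tail_pi}) of Proposition~\ref{prop:tail_pi} gives $\ol{\vc{\pi}}(N)\vc{e}/\ol{F}(N)\to C$, and dividing~(\ref{eq:subgeo_tvd}) through by this ratio yields $\|\vc{\pi}^{(N)}-\vc{\pi}\|/\ol{\vc{\pi}}(N)\vc{e}\to 1$.
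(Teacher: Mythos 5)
Your lower bound and the derivation of \eqref{eq:subgeo_tvd-2} from \eqref{eq:subgeo_tvd} and \eqref{eq:tail_pi} are fine, and you correctly identify the crux: interchanging the limit $N\to\infty$ with the sum over levels. But the interchange itself is exactly what your upper bound does not deliver. You appeal to ``the uniform-in-$k$ strengthening of \eqref{eq:subexp-01} that I expect Proposition~\ref{prop:high-order_long-tailed} to provide,'' yet that proposition is only a scalar shift-insensitivity property of $\ol{F}$ (namely $\ol{F}(x-\xi x^{1-1/p})/\ol{F}(x)\to 1$); it says nothing about $\vc{\pi}^{(N)}(k)-\vc{\pi}(k)$, let alone uniformly in $k$. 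In the paper this uniform control is the hard analytic core: one starts from the explicit difference formula for $\vc{\pi}^{(N)}(k)-\vc{\pi}(k)$ of \cite[Lemma~4.2]{Ouchi-Masuyama21} (equation \eqref{eq:piN(k)-pi(k)}), and bounds the normalized head sum over $k\le N-\lfloor N^{1-1/p}\rfloor$ by three terms $C_1(N),C_2(N),C_3(N)$, using (i) Assumption~\ref{assum:aperiodic} to get geometric convergence $\vc{G}^m\to\vc{e}\vc{g}$ and hence the bound \eqref{eq:GN-Gn} on $|\vc{G}^{N-k}-\vc{G}^{n-k}|\vc{e}$ precisely because $N-k\ge\lfloor N^{1-1/p}\rfloor$, (ii) the bounds $\vc{F}_+(k;k)\vc{e}\le\psi_0 N\vc{e}$ and $\sum_k|\vc{S}(k)|\vc{e}\le S\vc{e}$, and (iii) Assumption~\ref{assum3} to compare $\ool{\vc{A}},\ool{\vc{B}},\ol{\vc{A}},\ol{\vc{B}}$ with $\ol{F}$. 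Your proposal uses none of this machinery and, tellingly, never uses Assumption~\ref{assum:aperiodic} at all, even though that aperiodicity hypothesis is the very condition that separates the total-variation result from the level-wise one (as the paper's concluding remarks stress).

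There is also a quantitative inconsistency in your split. You want the head $k\le K_N$ to carry the full asymptotic $C\,\ol{F}(N)(1+o(1))$ and the tail $k>K_N$ to be $o(\ol{F}(N))$. But since $\ol{\vc{\pi}}(K_N)\vc{e}\ge\ol{\vc{\pi}}(N)\vc{e}\sim C\,\ol{F}(N)$ whenever $K_N\le N$, the crude triangle bound on the tail is of exact order $\ol{F}(N)$, never $o(\ol{F}(N))$; and if you push $K_N$ beyond $N$ to make the tail negligible, no uniform version of \eqref{eq:subexp-01} can hold for $k$ near $N$ (there $\vc{G}^{N-k}$ is not close to $\vc{e}\vc{g}$, so the head estimate collapses). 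The paper avoids this dead end by proving only a uniform-in-$N$ \emph{bound} \eqref{eq:sup_piN-pi_head} on the normalized head sum, bounding the tail block by $(2+\delta)\,\ol{\vc{\pi}}(N-\lfloor N^{1-1/p}\rfloor)\vc{e}/\ol{F}(N)$, which stays bounded thanks to \eqref{eq:tail_pi} and Proposition~\ref{prop:high-order_long-tailed} (this, not head uniformity, is where the $p$th-order long-tail hypothesis enters), and then invoking the dominated convergence theorem together with the level-wise limit \eqref{eq:subexp-01} to identify the constant. As it stands, your argument assumes the key estimate rather than proving it, so the proof of \eqref{eq:subgeo_tvd} is incomplete.
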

\begin{proof}
We provide the proof of (\ref{eq:subgeo_tvd}) in Appendix~\ref{sec:proof_of_theorem}. From \eqref{eq:tail_pi} and \eqref{eq:subgeo_tvd}, we obtain
\begin{align*}
\lim_{N \to \infty}
\frac{\|\vc{\pi}^{(N)} - \vc{\pi}\|}{\ol{\vc{\pi}}(N)\vc{e}}
&= \lim_{N \to \infty}
\frac{\|\vc{\pi}^{(N)} - \vc{\pi}\|}{\ol{F}(N)} \cdot
\frac{\ol{F}(N)}{\ol{\vc{\pi}}(N)\vc{e}}
\\
&= \frac{\vc{\pi}(0)\vc{c}_B + \ol{\vc{\pi}}(0)\vc{c}_A}{-\sigma}
\cdot
\frac{-\sigma}{\vc{\pi}(0)\vc{c}_B + \ol{\vc{\pi}}(0)\vc{c}_A}= 1,
\end{align*}
which yields \eqref{eq:subgeo_tvd-2}.
\end{proof}

\begin{REM}
Equation \eqref{eq:subgeo_tvd-2} requires \eqref{eq:tail_pi}  and thus $F \in \calS$.
\end{REM}

We comment on the total-variation convergence formula (\ref{eq:subgeo_tvd}) in Theorem~\ref{th:subgeo_tvd}, compared with the level-wise convergence formula (\ref{eq:subexp-01}). Equation (\ref{eq:subgeo_tvd}) shows that the convergence of (\ref{eq:subexp-01}) is uniform over $k \in \bbZ_+$. However, this uniform convergence is not obvious from the level-wise convergence formula (\ref{eq:subexp-01}). With the definition of the total variation norm, we can rewrite the left-hand side of (\ref{eq:subgeo_tvd}) as
\begin{align}
\lim_{N\to\infty} {\|\vc{\pi}^{(N)}-\vc{\pi}\| \over \ol{F}(N)}
&= \lim_{N\to\infty} \sum_{(k,i) \in \bbS}
{|\pi^{(N)}(k,i) - \pi(k,i)| \over \ol{F}(N)}.
\label{eqn:220813-01}
\end{align}
Therefore, if we are allowed to change the order between the limit and infinite sum in (\ref{eqn:220813-01}), then we can obtain the following result by substituting (\ref{eq:subexp-01}) into (\ref{eqn:220813-01}):
\begin{align*}
\lim_{N\to\infty} {\|\vc{\pi}^{(N)}-\vc{\pi}\| \over \ol{F}(N)}
&=  \sum_{(k,i) \in \bbS}
\lim_{N\to\infty}
{|\pi^{(N)}(k,i) - \pi(k,i)| \over \ol{F}(N)}
\nonumber
\\
&= {\vc{\pi}(0)\vc{c}_B + \ol{\vc{\pi}}(0)\vc{c}_A
\over
-\sigma
} \sum_{(k,i) \in \bbS} \pi(k,i)
\nonumber
\\
&= {\vc{\pi}(0)\vc{c}_B + \ol{\vc{\pi}}(0)\vc{c}_A
\over
-\sigma
},
\end{align*}
which leads to (\ref{eq:subgeo_tvd}). In this way, deriving the total-variation convergence formula (\ref{eq:subgeo_tvd}) from the level-wise convergence formula (\ref{eq:subexp-01}) demands changing the order between the limit and infinite sum. Such an operation is not allowed in general.

Finally, we mention the decay speed of the total variation error $\|\vc{\pi}^{(N)} - \vc{\pi}\|$. The total-variation convergence formula (\ref{eq:subgeo_tvd}), together Assumption~\ref{assum3}, implies that $\|\vc{\pi}^{(N)} - \vc{\pi}\|$ decays at same rate as $\ool{\vc{A}}(N)\vc{e}$ and/or $\ool{\vc{B}}(N)\vc{e}$, that is, the integrated tail distribution of level increments in steady state (for detail, see \cite[Section~5]{Ouchi-Masuyama21}). In addition, the second formula (\ref{eq:subgeo_tvd-2}) shows that $\|\vc{\pi}^{(N)} - \vc{\pi}\|$ also decays at same rate as the tail probability $\ol{\vc{\pi}}(N)\vc{e}$ of the original stationary distribution. Although we could infer from the level-wise convergence formula (\ref{eq:subexp-01}) these arguments on the total variation error, they are now theoretically guaranteed by the presentation of the total-variation convergence formula.

%
\section{Concluding Remarks}
\label{sec:concluding}

The main contribution of this paper is to present the subgeometric total-variation convergence formula for the stationary distribution in M/G/1-type Markov chains. The total-variation convergence formula is a stronger result than the level-wise convergence formula in terms of convergence. However, the former requires the additional condition, the aperiodicity of the $G$-matrix. It is a future task to remove this additional condition. As mentioned in \cite[Section 5]{Ouchi-Masuyama21-2}, another future task is to derive the geometric version of Theorem \ref{th:subgeo_tvd}; that is, the geometric total-variation convergence formula for the stationary distribution in the M/G/1-type Markov chains. Successful completion of this task would bring closure to the research on the convergence speed of the LI truncation approximation for M/G/1-type Markov chains.

\appendix
\section{Long-tailed and Subexponential Distributions}
\label{sec-subexponential_dist}

This section is concerned with long-tailed and subexponential distributions. We first define the classes of long-tailed and subexponential distributions. We then define the class of high-order long-tailed distributions, which is a subclass of the long-tailed one. Finally, we provide a proposition on the high-order long-tailed distributions. The proposition applies to the proof of Theorem~\ref{th:subgeo_tvd} in Section~\ref{sec:subgeometric}.

The following is the definition of long-tailed and subexponential distributions.%
\begin{DEF}[{}{\cite[Definitions 2.21 and 3.1]{Foss13}}]\label{defn-long-tailed}
\hfill\begin{enumerate}
\item A nonnegative random variable $Y$ and its distribution function $F$ are said to be {\it long-tailed} if and only if
\begin{align*}
\lim_{x \to \infty} {\PP(Y > x + y) \over \PP(Y > x)} = 1, \quad \forall y > 0.
\end{align*}
The class of long-tailed distributions is denoted by $\calL$.
\item A nonnegative random variable $Y$ and its distribution function $F$ are said to be {\it subexponential} if and only if
\begin{align*}
\lim_{x \to \infty} {\PP(Y_1 + Y_2 > x) \over \PP(Y > x)} = 2,
\end{align*}
where $Y_1$ and $Y_2$ are independent copies of $Y$. The class of subexponential distributions is denoted by $\calS$.
\end{enumerate}
\end{DEF}

\medskip

\begin{REM}
The class of long-tailed distribution includes that of subexponential distribution; that is, $\calS \subsetneq \calL$ (see \cite[Lemma 3.2]{Foss13}).
\label{rem1}
\end{REM}

\medskip

The following is the definition of high-order long-tailed distributions.
\begin{DEF}[{}{\cite[Definition 1.1]{Masu13}}]\label{defn-high-order_long-tailed}
A nonnegative random variable $Y$ and its distribution function $F$ are said to be {\it $p$th-order long-tailed} ($p \ge 1$) if and only if $Y^{1/p}$ is long-tailed, that is,
\begin{align*}
\lim_{x \to \infty} {\PP(Y^{1/p} > x + y) \over \PP(Y^{1/p} > x)} = 1, \quad \forall y > 0.
\end{align*}
The class of $p$th-order long-tailed distributions is denoted by $\calL^p$. Clearly, $\calL^1 = \calL$.
\end{DEF}

\medskip

\begin{REM}\label{rem:high-order_long-tailed}
The inclusion relation $\calL^{p_2} \subset \calL^{p_1} \subset \calL$ holds for $1 < p_1 < p_2$ (see \cite[Lemma~A.1]{Masu13}).
\end{REM}

\medskip

The following proposition contributes to the proof of Theorem~\ref{th:subgeo_tvd}.
\begin{PROP}[{}{\cite[Lemma~A.2]{Masu13}}]\label{prop:high-order_long-tailed}
For any $p \ge 1$, a nonnegative random variable $Y$ is $p$th-order long-tailed if and only if
\begin{equation}
\lim_{x\to\infty} {\PP(Y > x - \xi x^{1 - 1/p}) \over \PP(Y > x)} = 1
\quad \mbox{for some (and thus all) $\xi \in (-\infty,\infty)$}.
\label{eq:high-order_long-tailed}
\end{equation}
\end{PROP}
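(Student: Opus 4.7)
The plan is to translate \eqref{eq:high-order_long-tailed} to a statement about the long-tailedness of $Z := Y^{1/p}$. Since $\PP(Y > x) = \PP(Z > x^{1/p})$, setting $t = x^{1/p}$ and Taylor-expanding $(1 - \xi/t)^{1/p}$ yields
\[
(x - \xi x^{1-1/p})^{1/p} = x^{1/p}(1 - \xi x^{-1/p})^{1/p} = t - \xi/p + r_\xi(t),
\]
where $r_\xi(t) = O(t^{-1})$. Hence \eqref{eq:high-order_long-tailed} is equivalent to
\[
\lim_{t\to\infty}\frac{\PP(Z > t - \xi/p + r_\xi(t))}{\PP(Z > t)} = 1,
\]
and I would aim to prove that this condition holds for some (equivalently all) $\xi$ if and only if $Z \in \calL$.

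For the forward direction, assume $Z \in \calL$. A standard reindexing ($t \mapsto t-c$) shows that long-tailedness is equivalent to $\PP(Z > t + c)/\PP(Z > t) \to 1$ for every $c \in \bbR$, not only $c>0$. Fix $\xi \in \bbR$ and a small $\eta > 0$; for $t$ large one has $|r_\xi(t)| \le \eta$, so monotonicity of the survival function gives
\[
\frac{\PP(Z > t - \xi/p + \eta)}{\PP(Z > t)}
\le
\frac{\PP(Z > t - \xi/p + r_\xi(t))}{\PP(Z > t)}
\le
\frac{\PP(Z > t - \xi/p - \eta)}{\PP(Z > t)}.
\]
Both bracketing ratios tend to $1$ by the long-tailed property applied at the fixed shifts $-\xi/p \pm \eta$, so the squeeze theorem closes the forward direction.

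For the converse, suppose the displayed limit holds for some $\xi_0 \neq 0$; I take first $\xi_0 > 0$ and set $y_0 := \xi_0/p > 0$. For any fixed $y \in (0, y_0)$ one has $y_0 - y > 0$, hence $r_{\xi_0}(t) < y_0 - y$ eventually, which means $-y_0 + r_{\xi_0}(t) < -y$. Monotonicity of the survival function then forces
\[
\frac{\PP(Z > t - y)}{\PP(Z > t)}
\le
\frac{\PP(Z > t - y_0 + r_{\xi_0}(t))}{\PP(Z > t)} \to 1.
\]
Since the left-hand ratio is also $\ge 1$ (as $y > 0$), it converges to $1$. Reindexing $t \mapsto t + y$ yields $\PP(Z > t + y)/\PP(Z > t) \to 1$ for this single $y > 0$. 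Iterating this relation produces the same limit for every $ny$ with $n \in \bbN$, and monotonicity interpolates to all positive shifts, so $Z \in \calL$. The case $\xi_0 < 0$ is symmetric: the analogous bracketing directly yields $\PP(Z > t + y)/\PP(Z > t) \to 1$ for $y \in (0, |\xi_0|/p)$.

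The main subtlety is the converse: one must extract a genuine constant positive shift of $Z$ from a single condition whose shift carries a vanishing but a priori arbitrary error $r_{\xi_0}(t)$. The key observation is that any constant shift strictly smaller in absolute value than $\xi_0/p$ eventually dominates $r_{\xi_0}(t)$, so a one-sided monotonicity bound suffices; no uniform insensitivity lemma for long-tailed distributions is required.
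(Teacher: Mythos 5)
The paper does not prove this proposition at all: it is imported verbatim from \cite[Lemma~A.2]{Masu13} and used as a black box, so there is no internal argument to compare yours against. Your blind proof is, as far as I can check, correct and self-contained. The reduction $\PP(Y>x)=\PP(Z>x^{1/p})$ with $Z=Y^{1/p}$, $t=x^{1/p}$, and the expansion $(x-\xi x^{1-1/p})^{1/p}=t-\xi/p+O(t^{-1})$ is the natural way to turn \eqref{eq:high-order_long-tailed} into a constant-shift condition on $Z$; the forward direction via bracketing $r_\xi(t)$ between $\pm\eta$ and using long-tailedness of $Z$ at the fixed shifts $-\xi/p\pm\eta$ is sound (you do not even need $\eta\to 0$, since both bracketing ratios tend to $1$ for any fixed $\eta$); and your converse correctly extracts a genuine constant shift $y\in(0,|\xi_0|/p)$ by letting the eventually-dominated error $r_{\xi_0}(t)$ be absorbed by one-sided monotonicity, then upgrades to all shifts by the standard iteration-plus-interpolation argument. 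Two cosmetic caveats you might flag: the ``for some $\xi$'' in the statement must implicitly mean some $\xi\neq 0$ (otherwise $\xi=0$ makes the condition vacuous), which your converse silently and correctly assumes; and the whole discussion presupposes $\PP(Y>x)>0$ for all $x$, i.e.\ unbounded support, which is standard for $\calL$ but worth a word. Neither affects the validity of your argument.
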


\begin{REM}
Let $F_I$ denote the integrated tail distribution of $F$ on $\bbR_+$, and let $\ol{F}_I(x) = 1 - F_I(x)$ for $x \in \bbR_+$, that is,
\begin{align*}
\ol{F}_I(x) = { \dm\int_{x}^{\infty} \ol{F}(t) dt \over \dm\int_{0}^{\infty} \ol{F}(t) dt}.
\end{align*}
Furthermore, suppose that $F \in \calL^{p}$ for some $p \ge 1$. Since $F \in \calL$ (see Remark~\ref{rem:high-order_long-tailed}), we have (see, e.g., \cite[Lemma~2.25]{Foss13})
\begin{align*}
\lim_{x\to\infty} {\ol{F}(x) \over \ol{F}_I(x)} = 0.
\end{align*}
However, $F_I$ also belongs to class $\calL^p$. Indeed, using l'H\^{o}pital's rule, we obtain
\begin{align*}
\lim_{x\to\infty} {1 - F_I(x - x^{1 - 1/p}) \over 1 - F_I(x)}
&= \lim_{x\to\infty}
{
\dm\int_{x - x^{1 - 1/p}}^{\infty} \ol{F}(t) dt
\over
\dm\int_{x}^{\infty} \ol{F}(t) dt
}
=\lim_{x\to\infty}
{\ol{F}(x - x^{1 - 1/p}) \over \ol{F}(x)}
=1,
\end{align*}
and thus $F_I \in \calL^p$.
\end{REM}
\section{Proof of (\ref{eq:subgeo_tvd}) in Theorem~\ref{th:subgeo_tvd}}
\label{sec:proof_of_theorem}

We introduce the notation needed in this section. For any matrix $\vc{V}:=(V_{i,j})$, let $|\vc{V}|$ denote the matrix whose $(i,j)$th element is equal to $|V_{i,j}|$. Similarly, for any (row or column) vector $\vc{v}:=(v_i)$, let $|\vc{v}|$ denote the vector whose $i$th element is equal to $|v_i|$. Using this notation, we have
\begin{align*}
|\vc{\pi}^{(N)} - \vc{\pi}| \vc{e}
= \sum_{(k,i) \in \bbS} |\pi^{(N)}(k,i) - \pi(k,i)|
= \|\vc{\pi}^{(N)}-\vc{\pi}\|.
\end{align*}

To prove \eqref{eq:subgeo_tvd}, we begin with showing that it holds if
\begin{equation}
\sup_{N \in \bbN} \sum_{k=0}^{N - \lfloor N^{1- 1/p} \rfloor}
{|\vc{\pi}^{(N)}(k) - \vc{\pi}(k)|\vc{e} \over  \ol{F}(N) } < \infty,
\label{eq:sup_piN-pi_head}
\end{equation}
where $\lfloor\, \cdot \,\rfloor$ denotes the floor function. For simplicity, we use $\theta := 1 - 1/p$ throughout this section. Proposition~\ref{prop:convergence_piN_to_pi} implies that there exists a constant $\delta > 0$ such that
\[
\sum_{k=m+1}\vc{\pi}^{(N)}(k) \le (1 + \delta)\ol{\vc{\pi}}(m), \qquad m \in \bbZ_+, \quad N \in \bbN,
\]
and thus
\begin{alignat}{2}
\sum_{k = N - \lfloor N^\theta \rfloor + 1}^\infty {|\vc{\pi}^{(N)}(k) - \vc{\pi}(k)|\vc{e} \over \ol{F}(N)}
&\le \sum_{k = N - \lfloor N^\theta \rfloor + 1}^\infty {(\vc{\pi}^{(N)}(k) + \vc{\pi}(k))\vc{e} \over \ol{F}(N)}
\nonumber
\\
&\le (2 + \delta) {\ol{\vc{\pi}}(N - \lfloor N^\theta \rfloor)\vc{e} \over \ol{F}(N)},& &\qquad N \in \bbN.
\label{eq:tail_piN}
\end{alignat}
It follows from \eqref{eq:tail_pi}, \eqref{eq:high-order_long-tailed}, and $F \in \calS \cap \calL^p$ that
\begin{align}
\lim_{N \to \infty} {\ol{\vc{\pi}}(N- \lfloor N^\theta \rfloor)\vc{e} \over \ol{F}(N)}
&= \lim_{N \to \infty} {\ol{\vc{\pi}}(N- \lfloor N^\theta \rfloor)\vc{e} \over \ol{F}(N- \lfloor N^\theta \rfloor)} {\ol{F}(N- \lfloor N^\theta \rfloor) \over \ol{F}(N)}
\nonumber
\\
&= {\vc{\pi}(0)\vc{c}_B + \ol{\vc{\pi}}(0)\vc{c}_A \over -\sigma} < \infty.
\label{eq:tail_pi-2}
\end{align}
Combining \eqref{eq:tail_piN} and \eqref{eq:tail_pi-2} yields
\begin{equation}
\sup_{N \in \bbN} \sum_{k = N - \lfloor N^\theta \rfloor + 1}^\infty {|\vc{\pi}^{(N)}(k) - \vc{\pi}(k)|\vc{e} \over \ol{F}(N)} < \infty.
\label{eq:tail_piN-pi}
\end{equation}
From (\ref{eq:sup_piN-pi_head}) and (\ref{eq:tail_piN-pi}), we have
\begin{align*}
\sup_{N \in \bbN} \sum_{k =0}^\infty {|\vc{\pi}^{(N)}(k) - \vc{\pi}(k)|\vc{e} \over \ol{F}(N)} < \infty.
\end{align*}
Therefore, using \eqref{eq:subexp-01} and the dominated convergence theorem, we obtain
\begin{align*}
\lim_{N\to\infty} {\|\vc{\pi}^{(N)}-\vc{\pi}\| \over \ol{F}(N)} &= \sum_{k=0}^\infty \lim_{N\to\infty} {|\vc{\pi}^{(N)}(k)-\vc{\pi}(k)|\vc{e} \over \ol{F}(N)}
\\
&= \sum_{k=0}^\infty {\vc{\pi}(0)\vc{c}_B + \ol{\vc{\pi}}(0)\vc{c}_A \over -\sigma}\vc{\pi}(k)\vc{e}
\\
&= {\vc{\pi}(0)\vc{c}_B + \ol{\vc{\pi}}(0)\vc{c}_A \over -\sigma},
\end{align*}
and thus \eqref{eq:subgeo_tvd} holds.

Next, we estimate the left-hand side of \eqref{eq:sup_piN-pi_head} by using the difference formula for $\vc{\pi}^{(N)}(k) - \vc{\pi}(k)$ \cite[Lemma~4.2]{Ouchi-Masuyama21}. To this end, we introduce some matrices. Let $\vc{F}_+(k; \ell) := (F_+(k,i; \ell,j))$ and $\vc{H}_{\vc{\alpha}}(k;\ell) := (H_{\vc{\alpha}}(k,i; \ell,j))$ for $k$, $\ell \in \bbZ_+$ denote $M_{k\vmin 1} \times M_{\ell\vmin 1}$ matrices such that
\begin{align}
F_+(k, i; \ell, j) &= \EE_{(k,i)} \left[ \sum_{n=0}^{T_0 - 1} \one((X_n, J_n) = (\ell, j))\right],
\label{eq:def-F_+}
\\
H_{\vc{\alpha}}(k,i;\ell,j) &= \EE_{(k,i)}\!
\left[
\sum_{n=0}^{T_{\{\vc{\alpha}\}}-1} \one((X_{n},J_{n})=(\ell,j))
\right]
- \pi(\ell,j)\EE_{(k,i)}
\left[ T_{\{\vc{\alpha}\}} \right],
\label{eq:def-H}
\end{align}
where $\one(\cdot)$ denotes the indicator function and $\vc{\alpha} \in \bbS$ is a fixed state. Let
\begin{alignat}{2}
\vc{S}(k) &= (\vc{I}-\vc{\Phi}(0))^{-1}\vc{B}(-1)\vc{H}_{\vc{\alpha}}(0; k)
+ \vc{G}(\vc{I} -\vc{A} -\ol{\vc{m}}_A\vc{g})^{-1}\vc{e}\vc{\pi}(k),
&\quad
k &\in \mathbb{Z}_+.
\label{eq:defS}
\end{alignat}
Using these matrices, we express $\vc{\pi}^{(N)}(k) - \vc{\pi}(k)$, $k \in \{0,1,\dots,N\}$ and $N \in \bbN$ as follows:
\begin{align}
&\vc{\pi}^{(N)}(k) - \vc{\pi}(k)
\nonumber
\\
&~~= \vc{\pi}^{(N)}(0)
\left[
{1 \over -\sigma}
\ool{\vc{B}}(N - 1)\vc{e}
\vc{\pi}(k)
+
\sum_{n=N+1}^{\infty}
\vc{B}(n) ( \vc{G}^{N-k} - \vc{G}^{n-k} )
\vc{F}_+(k;k)
\right.
\nonumber
\\
&\qquad\qquad\qquad\qquad\quad
\Biggl.
+
\sum_{n=N+1}^{\infty}
\vc{B}(n)( \vc{G}^{N-1} - \vc{G}^{n-1} )
\vc{S}(k)
\Biggr]
\nonumber
\\
&~~~~
+ \sum_{\ell=1}^{\infty}
\vc{\pi}^{(N)}(\ell)
\left[
{1 \over -\sigma}
\ool{\vc{A}}(N - 1)\vc{e}
\vc{\pi}(k)
+ \sum_{n=N+1}^{\infty}
\vc{A}(n) (\vc{G}^{N + \ell - k} - \vc{G}^{n + \ell - k})
\vc{F}_+(k;k)
\right.
\qquad
\nonumber
\\
&\qquad\qquad\qquad\qquad\quad
\Biggl.
+ \sum_{n=N+1}^{\infty}
\vc{A}(n) (\vc{G}^{N + \ell - 1} - \vc{G}^{n + \ell - 1})
\vc{S}(k)
\Biggr].
\label{eq:piN(k)-pi(k)}
\end{align}
It follows from \eqref{eq:piN(k)-pi(k)} that
\begin{align*}
&\sum_{k=0}^{N- \lfloor N^\theta \rfloor}{|\vc{\pi}^{(N)}(k) - \vc{\pi}(k)|\vc{e} \over \ol{F}(N)}
\\
&~~\le \sum_{k=0}^{N- \lfloor N^\theta \rfloor}{\vc{\pi}^{(N)}(0) \over \ol{F}(N)}
\left[
{1 \over -\sigma}
\ool{\vc{B}}(N - 1)\vc{e}\vc{\pi}(k)\vc{e}
+
\sum_{n=N+1}^{\infty}
\vc{B}(n) \left| \vc{G}^{N-k} - \vc{G}^{n-k} \right|
\vc{F}_+(k;k)\vc{e}
\right.
\\
&\qquad\qquad\qquad\qquad\quad
\Biggl.
+
\sum_{n=N+1}^{\infty}
\vc{B}(n)\left| \vc{G}^{N-1} - \vc{G}^{n-1} \right|
\left| \vc{S}(k) \right|\vc{e}
\Biggr]
\\
&~~~~
+ \sum_{k=0}^{N- \lfloor N^\theta \rfloor}\sum_{\ell=1}^{\infty}
{\vc{\pi}^{(N)}(\ell) \over \ol{F}(N)}
\left[
{1 \over -\sigma}
\ool{\vc{A}}(N - 1)\vc{e}
\vc{\pi}(k)\vc{e}
+ \sum_{n=N+1}^{\infty}
\vc{A}(n) \left| \vc{G}^{N + \ell - k} - \vc{G}^{n + \ell - k} \right|
\vc{F}_+(k;k)\vc{e}
\right.
\\
&\qquad\qquad\qquad\qquad\quad
\Biggl.
+ \sum_{n=N+1}^{\infty}
\vc{A}(n) \left| \vc{G}^{N + \ell - 1} - \vc{G}^{n + \ell - 1} \right|
\left| \vc{S}(k) \right|\vc{e}
\Biggr], \qquad N \in \bbN.
\end{align*}
Therefore, we obtain an upper bound for the left-hand side of \eqref{eq:sup_piN-pi_head}:
\begin{equation}
\sum_{k=0}^{N- \lfloor N^\theta \rfloor}{|\vc{\pi}^{(N)}(k) - \vc{\pi}(k)|\vc{e} \over \ol{F}(N)}
\le C_1(N) + C_2(N) + C_3(N),
\label{eq:head_piN-pi_2}
\end{equation}
where
\begin{align}
C_1(N) &= \sum_{k=0}^{N- \lfloor N^\theta \rfloor} {1 \over -\sigma\ol{F}(N)}
\left[
\vc{\pi}^{(N)}(0)\ool{\vc{B}}(N-1)\vc{e} + \sum_{\ell=1}^{\infty}\vc{\pi}^{(N)}(\ell)\ool{\vc{A}}(N-1)\vc{e}\right]\vc{\pi}(k)\vc{e},
\label{defn:C_1(N)}
\\
C_2(N) &= \sum_{k=0}^{N- \lfloor N^\theta \rfloor} {1 \over \ol{F}(N)} \Biggl[
\vc{\pi}^{(N)}(0)\sum_{n=N+1}^\infty \vc{B}(n) \left|\vc{G}^{N-k} - \vc{G}^{n-k}\right|\Biggr.
\nonumber
\\
&\Biggl.\qquad\qquad\qquad+ \sum_{\ell=1}^\infty \vc{\pi}^{(N)}(\ell) \sum_{n=N+1}^\infty \vc{A}(n) \left|\vc{G}^{N-k} - \vc{G}^{n-k}\right| \vc{G}^{\ell}
\Biggr]\vc{F}_+(k;k)\vc{e},
\label{defn:C_2(N)}
\\
C_3(N) &= \sum_{k=0}^{N - \lfloor N^\theta \rfloor} {1 \over \ol{F}(N)}\Biggl[
\vc{\pi}^{(N)}(0)\sum_{n=N+1}^\infty \vc{B}(n) \left|\vc{G}^{N-1} - \vc{G}^{n-1}\right| \Biggr.
\nonumber
\\
&\Biggl.\qquad\qquad\qquad+ \sum_{\ell=1}^\infty \vc{\pi}^{(N)}(\ell) \sum_{n=N+1}^\infty \vc{A}(n) \left|\vc{G}^{N-1} - \vc{G}^{n-1}\right| \vc{G}^{\ell}
\Biggr]|\vc{S}(k)|\vc{e}.
\label{defn:C_3(N)}
\end{align}

The upper bound (\ref{eq:head_piN-pi_2}) enables us to reduce the proof of (\ref{eq:sup_piN-pi_head}) to those of the following equations:
\begin{align}
\lim_{N\to\infty} C_1(N)
&= {1 \over -\sigma}
\left[
\vc{\pi}(0) \vc{c}_B
+ \ol{\vc{\pi}}(0) \vc{c}_A
\right],
\label{eq:C_bounded}
\\
\lim_{N\to\infty} C_2(N) &= 0,
\label{eq:D_bounded}
\\
\lim_{N\to\infty} C_3(N) &= 0.
\label{eq:E_bounded}
\end{align}
In what follows, we show that these equations hold to achieve our goal, to prove (\ref{eq:sup_piN-pi_head}) and thus to prove Theorem~\ref{th:subgeo_tvd}.

First, we prove (\ref{eq:C_bounded}). Note that
\begin{align*}
\vc{\pi}^{(N)}(0)\vc{e} &\le 1, \quad
\sum_{\ell=1}^{\infty}\vc{\pi}^{(N)}(\ell)\vc{e} \le 1, \quad
\sum_{k=0}^{N- \lfloor N^\theta \rfloor}\vc{\pi}(k)\vc{e} \le 1 \quad
\mbox{for all $N \in \bbN$}.
\end{align*}
In addition, Assumption \ref{assum3} yields
\begin{subequations}\label{eqn:220830-01}
\begin{align}
\lim_{N\to\infty} {\ool{\vc{A}}(N-1)\vc{e} \over \ol{F}(N)}
&= \lim_{N\to\infty} {\ool{\vc{A}}(N-1)\vc{e} \over \ol{F}(N-1)} {\ol{F}(N) \over \ol{F}(N-1)} = \vc{c}_A,
\\
\lim_{N\to\infty} {\ool{\vc{B}}(N-1)\vc{e} \over \ol{F}(N)}
&= \lim_{N\to\infty} {\ool{\vc{B}}(N-1)\vc{e} \over \ol{F}(N-1)} {\ol{F}(N) \over \ol{F}(N-1)} = \vc{c}_B.
\end{align}
\end{subequations}
Applying the dominated convergence theorem, (\ref{eqn:220830-01}), and Proposition~\ref{prop:convergence_piN_to_pi} to (\ref{defn:C_1(N)}), we obtain
\begin{align*}
\lim_{N\to\infty} C_1(N)
&= {1 \over -\sigma}
\sum_{k=0}^{\infty}
\left[
\lim_{N\to\infty}
\vc{\pi}^{(N)}(0) { \ool{\vc{B}}(N-1)\vc{e} \over \ol{F}(N) }
+ \sum_{\ell=1}^{\infty}
\lim_{N\to\infty} \vc{\pi}^{(N)}(\ell){ \ool{\vc{A}}(N-1)\vc{e} \over \ol{F}(N)}
\right]
\vc{\pi}(k)\vc{e}
\nonumber
\\
&= {1 \over -\sigma}
\left[
\vc{\pi}(0) \vc{c}_B
+ \sum_{\ell=1}^{\infty}
\vc{\pi}(\ell) \vc{c}_A
\right]
\sum_{k=0}^{\infty}\vc{\pi}(k)\vc{e}
\nonumber
\\
&= {1 \over -\sigma}
\left[
\vc{\pi}(0) \vc{c}_B
+ \ol{\vc{\pi}}(0) \vc{c}_A
\right],
\end{align*}
which shows that (\ref{eq:C_bounded}) holds.

Next, we prove (\ref{eq:D_bounded}). It follows from \eqref{eq:def-F_+} that
\begin{equation*}
F_+(k, i; \ell, j) \le \EE_{(k,i)}[T_0], \qquad (k, i), (\ell, j) \in \bbS.
\end{equation*}
This inequality and \cite[Lemma~3.5]{Masu21-M/G/1-Subexp} imply that there exists some $\psi_0 > 0$ such that
\begin{equation*}
\vc{F}_+(k; k) \vc{e} \le \psi_0 N \vc{e} \quad
\mbox{for all $N \in \bbN$ and $k \in \{0,1,\dots,N\}$.}
\end{equation*}
Applying this to (\ref{defn:C_2(N)}) and using $\vc{G}\vc{e}=\vc{e}$, we obtain
\begin{align}
C_2(N) &\le \psi_0 N
\sum_{k=0}^{N- \lfloor N^\theta \rfloor} {1 \over \ol{F}(N)} \Biggl[
\vc{\pi}^{(N)}(0)\sum_{n=N+1}^\infty \vc{B}(n) \left|\vc{G}^{N-k} - \vc{G}^{n-k}\right|\vc{e}
\Biggr.
\nonumber
\\
&\Biggl.\qquad\qquad\qquad{}
+ \sum_{\ell=1}^\infty \vc{\pi}^{(N)}(\ell) \sum_{n=N+1}^\infty \vc{A}(n) \left|\vc{G}^{N-k} - \vc{G}^{n-k}\right|\vc{e}
\Biggr].
\label{eqn:220830-02}
\end{align}
Furthermore, Assumption~\ref{assum:aperiodic} implies (see, e.g., \cite[Theorem~8.5.1]{Horn13}) that there exists some $\varepsilon > 0$ such that, for all $m \in \bbZ_+$,
\begin{equation*}
\|\vc{G}^m - \vc{e}\vc{g} \| \le C_G (1+\varepsilon)^{-m}
\quad \mbox{for some $C_G > 0$ and $\varepsilon > 0$.}
\end{equation*}
Therefore, there exists some $\psi_1 > 0$ such that
\begin{align}
\left|\vc{G}^{N-k} - \vc{G}^{n-k}\right| \vc{e}
\le \psi_1 (1+\varepsilon)^{-N^{\theta}} \vc{e}, \quad N\in\bbN, ~n \in \bbZ_{\geqslant N+1},~0 \le k \le N- \lfloor N^\theta \rfloor.
\label{eq:GN-Gn}
\end{align}
Evaluating the right-hand side of (\ref{eqn:220830-02}) by (\ref{eq:GN-Gn}), we have
\begin{align}
C_2(N)
&\le \psi_0 \psi_1 N(1+\varepsilon)^{-N^{\theta}}
\sum_{k=0}^{N- \lfloor N^\theta \rfloor} {1 \over \ol{F}(N)}
\left[
\vc{\pi}^{(N)}(0)\sum_{n=N+1}^\infty \vc{B}(n)\vc{e}
+ \sum_{\ell=1}^\infty \vc{\pi}^{(N)}(\ell) \sum_{n=N+1}^\infty \vc{A}(n) \vc{e}
\right]
\nonumber
\\
&=\psi_0 \psi_1 N(N-\lfloor N^\theta \rfloor + 1)(1+\varepsilon)^{-N^{\theta}}
\nonumber
\\
&\qquad\qquad\times\left[
\vc{\pi}^{(N)}(0) {\ol{\vc{B}}(N)\vc{e} \over \ol{F}(N)}
+ \sum_{\ell=1}^\infty \vc{\pi}^{(N)}(\ell){\ol{\vc{A}}(N)\vc{e} \over \ol{F}(N)}
\right].
\label{eqn:220830-03}
\end{align}
Assumption~\ref{assum3} ensures that
\begin{subequations}\label{eq:ol_AB_F}
\begin{align}
\lim_{N\to\infty}{\ol{\vc{A}}(N)\vc{e} \over \ol{F}(N)} = \lim_{N\to\infty}{\ool{\vc{A}}(N-1)\vc{e} - \ool{\vc{A}}(N)\vc{e} \over \ol{F}(N)} = \vc{0},
\label{eq:ol_A/F}
\\
\lim_{N\to\infty}{\ol{\vc{B}}(N)\vc{e} \over \ol{F}(N)} = \lim_{N\to\infty}{\ool{\vc{B}}(N-1)\vc{e} - \ool{\vc{B}}(N)\vc{e} \over \ol{F}(N)} = \vc{0}.
\label{eq:ol_B/F}
\end{align}
\end{subequations}
Combining these and (\ref{eqn:220830-03}) leads to
\begin{equation*}
\lim_{N\to\infty} C_2(N) = 0,
\end{equation*}
which shows that (\ref{eq:D_bounded}) holds.

Finally, we prove (\ref{eq:E_bounded}). It follows from \eqref{eq:defS} that, for $N \in \bbN$,
\begin{align}
\sum_{k=0}^{N - \lfloor N^\theta \rfloor} |\vc{S}(k)|\vc{e} &\le \sum_{k=0}^\infty |\vc{S}(k)|\vc{e}
\nonumber
\\
&\le (\vc{I}-\vc{\Phi}(0))^{-1}\vc{B}(-1)\sum_{k=0}^\infty |\vc{H}_{\vc{\alpha}}(0; k)|\vc{e}
+ \vc{G}\left|(\vc{I} -\vc{A} -\ol{\vc{m}}_A\vc{g})^{-1}\right|\vc{e}.
\label{eq:sum_Sk}
\end{align}
It also follows from \eqref{eq:def-H} that
\begin{align}
\sum_{(k, j)\in\bbS} |H_{\vc{\alpha}}(0,i; k, j)|
&\le \sum_{(k, j)\in\bbS} \EE_{(0,i)}\left[
\sum_{n=0}^{T_{\{\vc{\alpha}\}}-1} \one((X_n, J_n) = (k, j))\right]
+ \sum_{(k, j)\in\bbS} \pi(k, j)\EE_{(0,i)}[T_{\{\vc{\alpha}\}}]
\nonumber
\\
&= 2\EE_{(0,i)}[T_{\{\vc{\alpha}\}}] < \infty, \qquad i\in\bbM_0.
\label{eq:sum_H}
\end{align}
The combination of \eqref{eq:sum_Sk} and \eqref{eq:sum_H} implies that there exists a constant $S>0$ such that
\begin{equation}
\sum_{k=0}^{N - \lfloor N^\theta \rfloor} |\vc{S}(k)|\vc{e}
\le S\vc{e} \quad \mbox{for all $N \in \bbN$.}
\label{eqn:220830-05}
\end{equation}
Thus, substituting \eqref{eq:GN-Gn}, \eqref{eqn:220830-05}, and $\vc{G}\vc{e} = \vc{e}$ into \eqref{defn:C_3(N)} yields
\begin{align}
C_3(N) &\le S {1 \over \ol{F}(N)}\Biggl[
\vc{\pi}^{(N)}(0)\sum_{n=N+1}^\infty \vc{B}(n) \left|\vc{G}^{N-1} - \vc{G}^{n-1}\right| \vc{e}
\Biggr.
\nonumber
\\
&\Biggl.\qquad\qquad\qquad+ \sum_{\ell=1}^\infty \vc{\pi}^{(N)}(\ell) \sum_{n=N+1}^\infty \vc{A}(n) \left|\vc{G}^{N-1} - \vc{G}^{n-1}\right|\vc{e}  \Biggr]
\nonumber
\\
&\le S\psi_1 (1+\varepsilon)^{-N^{\theta}}
\left[
\vc{\pi}^{(N)}(0) {\ol{\vc{B}}(N)\vc{e} \over \ol{F}(N)}
+ \sum_{\ell=1}^\infty \vc{\pi}^{(N)}(\ell) \sum_{n=N+1}^\infty {\ol{\vc{A}}(N)\vc{e} \over \ol{F}(N)} \right].
\label{eq:ol_AB_S}
\end{align}
Applying \eqref{eq:ol_AB_F} to (\ref{eq:ol_AB_S}) results in
\begin{equation*}
\lim_{N\to\infty} C_3(N) = 0,
\end{equation*}
which shows that (\ref{eq:E_bounded}) holds. The proof is completed.
\section*{Acknowledgments}
The research of the second author was supported in part by JSPS KAKENHI Grant Number JP21K11770.



\end{document}